\newtheorem{theorem}{Theorem}[section]
\newtheorem{lemma}[theorem]{Lemma}
\newtheorem{corollary}[theorem]{Corollary}
\theoremstyle{remark}
\newtheorem*{remark}{\bf  \itshape  \textup{Remark}}
\title{On the distribution of $k$-free numbers on the view point of random walks}
\author{Kui Liu}
\address{School of Mathematics and Statistics, Qingdao University, 308 Ningxia Road, Shinan District, Qingdao, Shandong, China}
\email{liukui@qdu.edu.cn}
\author{Meijie Lu}
\address{School of Mathematics, Shandong University, Jinan 250100, Shandong, China}
\email{meijie.lu@hotmail.com}
\subjclass[2010]{60F15, 60G50, 11N37, 11H06}
\keywords{Random walks, $k$-free numbers, arithmetic sequence.}
\begin{document}
	
	\maketitle
	
	\begin{abstract}
		In this paper, we investigate the distribution of $k$-free numbers in a class of $\alpha$-random walks on the integer lattice $\mathbb{Z}$. In these walks, the walker starts from a non-negative integer $r$ and moves to the right by $a$ units with probability $\alpha$, or by $b$ units with probability $1-\alpha$.  For $k\geq 3$, we obtain the asymptotic proportion of $k$-free numbers in a path of such $\alpha$-random walks in almost surely sense. This provides a generalization of a classical result on the distribution of $k$-free numbers in arithmetic progressions.
	\end{abstract}
	
	\section{Introduction}
	
	Let $k\geq 2$ be an integer. We say an integer $n\geq 1$ is a $k$-free number if for each prime $p\mid n$, one has $p^k\nmid n$. Traditionally, the $2$-free numbers are said to be square-free numbers. For any $N\geq 3$, let $Q_k(N)$ be the number of $k$-free numbers not exceeding $N$. Walfisz \cite{W} showed that
	\[
	Q_k(N)=\frac{N}{\zeta(k)}+O\Big(N^{1/k}\exp\big(-ck^{-8/5}(\log N)^{3/5}(\log\log N)^{1/5}\big)\Big)
	\]
	holds for some constant $c>0$, where $\zeta$ is the Riemann zeta function and $\exp(x):=e^{x},\ x\in\mathbb{R}$. This implies that  the asymptotic proportion of $k$-free numbers in the set of natural numbers $\mathbb{N}$ is $1/\zeta(k)$, i.e.
	\[ \lim\limits_{N\rightarrow\infty}\frac{Q_k(N)}{N}=\frac{1}{\zeta(k)}.
	\]
	
	People are also interested in the distribution of $k$-free numbers in arithmetic sequences.  Let $Q_k(N;q,r)$ be the number of $k$-free numbers, satisfying the congruence $n\equiv r\bmod q$ and not exceeding $N$. Then for fixed integers $q\geq 1$ and $0\leq r\leq q-1$ such that the greatest common divisor $\gcd(r,q)$ is $k$-free, we have 
	
	\begin{align}\label{eq: k-free numbers in arithmetic progressions}
		\lim\limits_{N\rightarrow \infty}\frac{Q_k(N;q,r)}{N}=\beta_k(q,r),
	\end{align}
	where
	\[
	\beta_k(q,r)=\frac{\varphi(q)}{\gcd(r,q)\varphi\big(q/\gcd(r,q)\big)}\frac{1}{q\zeta(k)}\prod\limits_{p\mid q}\Big(1-\frac{1}{p^k}\Big)^{-1},
	\]
	$\varphi$ is the Euler's totient function and the product is taken over all primes that divide $q$. This result is contained in \cite{La} (for $k=2$) and \cite{O} (for $k>2$), and see also \cite{P,CR}.
	
	Although \eqref{eq: k-free numbers in arithmetic progressions} is classical, we can still understand it from an alternative perspective. Imagine there is a 'random' walker on $\mathbb{Z}$, who starts from the point $P_0=r$ and moves $q$ units to the right in each step. Then \eqref{eq: k-free numbers in arithmetic progressions} tells us that, among all the steps taken, the asymptotic proportion of those steps in which the walker hit a $k$-free number is $\beta_k(q,r)$. With this viewpoint, it is natural to explore the distribution of $k$-free numbers arising from true random walks.
	
	Random walks are significant subjects of study in probability theory. A random walk describes a path formed by a series of random steps taken in a given space. In this paper, we focus on a specific type of random walk, which occurs on the one-dimensional integer lattice $\mathbb{Z}$. Let $\alpha\in(0,1)$,  $a,\ b\geq 1$ be two distinct integers, and $r\geq 0$ be a non-negative integer. We define an $\alpha$-random walk with step sizes $a$ and $b$, or simply an $\alpha$-random walk as follows: the starting point is $P_0:=r$ and the subsequent points are given by the recursion
	\[
	P_i:=P_{i-1}+W_i
	\]
	for $i=1,2,\cdots$, where the jumps
	\[   W_i:=\begin{cases}a, \quad {\rm~with~ probability~} \alpha,\\
		b, \quad{\rm~with~probability~}1-\alpha
	\end{cases}
	\]
	are independent random variables. Thus, the sequence $P_0,\ P_1,\ P_2,\cdots$ represents a random walk on $\mathbb{Z}$, which starts from the initial point $P_0=r$. In each step, the walker moves $a$ units to the right with probability $\alpha$ and $b$ units upwards with probability $1-\alpha$, respectively.
	
	Let $(X_i)_{i\geq 1}$ be a sequence of Bernoulli random variables, where each $X_i$ is given by
	\[ X_i:=\begin{cases}
		1,&{\rm{if}}\ P_i\ {\rm{is}}\ k\text{-}{\rm{free}},\\
		0,\ \ &\rm{otherwise.}
	\end{cases}
	\]
	For any integer $N\geq 1$, define a random variable
	\[ \overline{S}_{k,\alpha}(N):=\frac{X_1+\cdots+X_N}{N}.
	\]
	Then $\overline{S}_k(N)$ represents the proportion of those steps in which the above random walker hit a $k$-free number in the first $N$ steps. It should be noted that $\overline{S}_{k,\alpha}(N)$ depends on $a$, $b$, and $r$, although these dependencies are not explicitly stated.
	\begin{theorem}\label{thm:S_N=}
		With the above notations, for integer $k\geq 3$, we have
		\[  \lim_{N\rightarrow\infty}\overline{S}_{k,\alpha}(N)=\theta_k(a,b,r)
		\]
		almost surely, where the constant
		\begin{align}\label{def: theta(k,a,b,r)}
			\theta_k(a,b,r):=\prod_{\substack{p\\ \gcd(a,b,p^k)\mid r}}\Big(1-\frac{\gcd(a,b,p^k)}{p^k}\Big),
		\end{align}
		which is independent on $\alpha$, and the symbol $\prod_p$ denotes taking the product over primes.
	\end{theorem}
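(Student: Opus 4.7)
The plan is a second-moment argument. I will first establish that $\mathbb{E}[\overline{S}_{k,\alpha}(N)] \to \theta_k(a,b,r)$ as $N \to \infty$, then show $\mathrm{Var}(\overline{S}_{k,\alpha}(N)) = O(1/N)$, and finally upgrade to almost sure convergence by Chebyshev together with the Borel--Cantelli lemma applied along the subsequence $N_m = m^2$, extending to all $N$ by using monotonicity of the partial sum $\sum_{i \leq N} X_i$ and $N_{m+1}/N_m \to 1$. The key arithmetic observation is that if $Y_i$ denotes the number of $a$-jumps in the first $i$ steps (so $Y_i \sim \mathrm{Bin}(i, \alpha)$), then
\[
P_i = r + ib + (a-b) Y_i,
\]
so every question about $P_i \bmod m$ reduces to one about $Y_i \bmod m$. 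A standard character-sum / Fourier argument, using that $|\alpha e^{2\pi\mathrm{i} t/m} + (1-\alpha)| < 1$ whenever $t \not\equiv 0 \pmod m$, gives the equidistribution estimate $\mathbb{P}(Y_i \equiv y_0 \bmod m) = 1/m + O(\rho_m^i)$ with $\rho_m < 1$, valid for every $\alpha \in (0,1)$; this is the mechanism by which the limit becomes independent of $\alpha$.

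\textbf{Expectation.} By M\"obius inversion $X_i = \sum_{d \geq 1} \mu(d)\mathbf{1}[d^k \mid P_i]$. The congruence $d^k \mid P_i$ is equivalent to $(a-b) Y_i \equiv -r - ib \pmod{d^k}$; with $d_1 := \gcd(a-b, d^k)$, it is solvable iff $d_1 \mid r + ib$, in which case the solution set has cardinality $d_1$ in $\mathbb{Z}/d^k\mathbb{Z}$. Equidistribution therefore gives $\mathbb{P}(d^k \mid P_i) = \mathbf{1}[d_1 \mid r+ib]\cdot d_1/d^k + O(\rho_{d^k}^i)$. Averaging in $i \leq N$, the arithmetic density of $i$ satisfying $d_1 \mid r + ib$ equals $g_d/d_1$ when $g_d := \gcd(a,b,d^k) \mid r$ and $0$ otherwise, so $\tfrac{1}{N}\sum_{i \leq N} \mathbb{P}(d^k \mid P_i) \to \mathbf{1}[g_d \mid r]\, g_d/d^k$. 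Since $\mu(d)$ kills non-squarefree $d$ and $g_d = \prod_{p \mid d} g_p$ is multiplicative for squarefree $d$, the resulting Dirichlet series factors as an Euler product
\[
\sum_{d \text{ squarefree}} \mu(d)\mathbf{1}[g_d \mid r]\frac{g_d}{d^k} = \prod_{p}\Big(1 - \mathbf{1}[g_p \mid r]\frac{g_p}{p^k}\Big) = \theta_k(a,b,r).
\]
The $d$-sum is truncated at $d \leq D = O(N^{1/k})$ (beyond which $\mathbb{P}(d^k \mid P_i) = 0$), and a uniform tail bound combining the Fourier decay for small $d$ with the trivial $g_d/d^k$ bound for large $d$ justifies the interchange of limits.

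\textbf{Variance.} For $i < j$, decompose $Y_j = Y_i + Y'$ with $Y' \sim \mathrm{Bin}(j-i, \alpha)$ independent of $Y_i$, so $P_j = P_i + b(j-i) + (a-b) Y'$. Expanding $\mathrm{Cov}(X_i, X_j)$ via M\"obius and applying the Fourier bound twice — to $Y_i$ (rate $\rho^i$) and to $Y'$ (rate $\rho^{j-i}$) — produces, after truncating $d_1, d_2 \leq N^{1/k}$ and controlling tails using $k \geq 3$, an estimate of the form $|\mathrm{Cov}(X_i, X_j)| \ll \rho^{j-i} + \rho^i + (\text{small tail})$. Summing,
\[
\mathrm{Var}\Big(\sum_{i \leq N} X_i\Big) \ll \sum_{i,j \leq N}|\mathrm{Cov}(X_i, X_j)| = O(N),
\]
so $\mathrm{Var}(\overline{S}_{k,\alpha}(N)) = O(1/N)$. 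Chebyshev then gives $\sum_m \mathbb{P}(|\overline{S}_{k,\alpha}(N_m) - \theta_k| > \varepsilon) < \infty$ for $N_m = m^2$, and Borel--Cantelli yields the a.s. limit along the subsequence. For $N \in [N_m, N_{m+1})$ the bounds $N_m \overline{S}_{k,\alpha}(N_m) \leq N \overline{S}_{k,\alpha}(N) \leq N_{m+1}\overline{S}_{k,\alpha}(N_{m+1})$ combined with $N_{m+1}/N_m \to 1$ pass the a.s. limit to all $N$.

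\textbf{Main obstacle.} The principal technical difficulty is the uniform control of tails in the M\"obius expansions for both the expectation and the variance. The Fourier contraction rate $\rho_m < 1$ degenerates to $1$ as $m$ grows, so the geometric decay in $i$ is not uniform in $d$, and one must carefully interpolate between exponential bounds (for $d$ in a moderate range) and arithmetic bounds of size $g_d/d^k$ (for $d$ near $N^{1/k}$). The hypothesis $k \geq 3$ is critical precisely at this step: it ensures absolute convergence of the relevant $\sum_d d^{-k}$-type tails with genuine margin, bypassing the borderline difficulty that appears in the classical square-free ($k=2$) problem and allowing the exchange of the $d$-sum with the Cesaro average in $i$ to be carried out cleanly.
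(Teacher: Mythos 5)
Your overall architecture is exactly the paper's: M\"obius inversion of the $k$-free indicator, equidistribution of the binomial variable $Y_i$ modulo $d^k/\gcd(a-b,d^k)$, an Euler-product evaluation of the resulting main term, and a second-moment/Borel--Cantelli upgrade to almost sure convergence. The arithmetic parts (solvability of $(a-b)Y_i\equiv -r-ib \bmod d^k$, the density $g_d/d_1$ of admissible $i$, the factorization into $\prod_p(1-\mathbf{1}[g_p\mid r]g_p/p^k)$) are all correct and match the paper's Lemma on the mean value of $f$. However, there are two genuine gaps in the analytic part. First, your equidistribution input $\mathbb{P}(Y_i\equiv y_0 \bmod m)=1/m+O(\rho_m^i)$ with $\rho_m<1$ depending on $m$ is not usable as stated: the moduli that occur are $m\asymp d^k$ with $d$ up to $(ai+r)^{1/k}$, i.e.\ $m$ as large as $ai+r$, for which $\rho_m^i=e^{-ci/m^2}$ is not small at all. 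The interpolation you defer to the ``main obstacle'' paragraph is the actual crux, and carried out naively (exponential bound for small $d$, trivial bound for large $d$) it does not close. What one needs --- and what the paper imports as Lemma~\ref{lem:sum_{i=r(d^k)}=} --- is the modulus-\emph{uniform} polynomial bound $\mathbb{P}(Y_i\equiv y_0\bmod m)=1/m+O\big((\alpha(1-\alpha)i)^{-1/2}\big)$, obtained by summing the full character sum $\sum_{t\neq 0}e^{-ci\|t/m\|^2}/m\ll i^{-1/2}$ rather than keeping only the worst frequency. With that bound the error in $\mathbb{E}(X_i)$ is $O(i^{1/k-1/2})$, which explains cleanly why $k\geq 3$ is needed (for $k=2$ this is $O(1)$).

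Second, your claimed variance bound $\mathrm{Var}(\overline{S}_{k,\alpha}(N))=O(1/N)$ is not justified and is stronger than what this method delivers. The covariance errors are not of the form $\rho^{j-i}+\rho^i$ for a fixed $\rho<1$; using the uniform equidistribution bound one gets terms like $i^{1/k-1/2}$ and $(j-i)^{-1/2}j^{1/k}$ (plus a term $ij^{-2+1/k}$ coming from the fact that the truncation point of the $d$-sum for $P_j$ depends on the realized value of $Y_i$), which sum over $i<j\leq N$ to $O(N^{3/2+1/k})$ and yield $\mathrm{Var}(\overline{S}_{k,\alpha}(N))=O(N^{1/k-1/2})$, i.e.\ only $O(N^{-1/6})$ for $k=3$. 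Consequently your Borel--Cantelli step along $N_m=m^2$ fails ($\sum_m N_m^{-1/6}$ diverges); you must take $N_m=m^s$ with $s(1/2-1/k)>1$, after which your monotonicity interpolation (using $N_{m+1}/N_m\to 1$ and $0\leq X_i\leq 1$) works verbatim --- this is precisely the content of the second-moment lemma the paper cites from Cilleruelo--Fern\'andez--Fern\'andez. Both gaps are repairable without changing your strategy, but as written the quantitative claims do not hold.
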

	
	\begin{remark}
		Note that if $\gcd(a,b)=1$, then $\theta_{k}(a,b,r)=1/\zeta(k)$, which coincides with the density of $k$-free numbers in $\mathbb{N}$. Unfortunately, the method we employed appears to be invalid for the case of $k=2$.
	\end{remark}
	
	\bigskip
	\textbf{Notations.} We use $\mathbb{R}$, $\mathbb{Z}$, and $\mathbb{N}$ to denote the sets of real numbers, integers, and positive integers, respectively. The notation $\gcd(m,n)$ represents the greatest common divisor of integers $m$ and $n$. The expression $f=O(g)$ (or $f\ll g$) means that $|f|\leq Cg$ for some constant $C>0$. When the constant $C$ depends on some parameters ${\bf \rho}$, we write $f=O_{\bf \rho}(g)$ (or $f\ll_\rho g)$. Moreover, we use $\mathbb{P}$, $\mathbb{E}$, and $\mathbb{V}$ to denote probability, expectation, and variance, respectively.
	\bigskip
	
	\textbf{Acknowledgements.} The authors are supported by National Natural Science Foundation of China (NSFC, Grant No. 12071238).
	
	\section{Preliminary lemmas}
	
	The following result is Lemma 11 from \cite{FF}, which can be regard as a binomial theorem with a congruence condition.
	
	\begin{lemma}\label{lem:sum_{i=r(d^k)}=}
		
		For any integers $n,\ d,\ c$ with  $n,\ d\geq 1$ and any $\alpha\in(0,1)$,we have
		\[
		\sum_{\substack{0\leq l\leq n\\l\equiv c \bmod d}}\binom{n}{l}\alpha^l(1-\alpha)^{n-l}=\frac{1}{d}+O\Big(\frac{1}{\sqrt{\alpha(1-\alpha)n}}\Big),
		\]
		where the implied constant in the $O$-symbol is absolute.
	\end{lemma}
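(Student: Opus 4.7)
The plan is to apply a roots-of-unity filter to detect the congruence $l\equiv c\bmod d$. Setting $\zeta:=e^{2\pi i/d}$ and using $\frac{1}{d}\sum_{j=0}^{d-1}\zeta^{j(l-c)}=\mathbf{1}_{l\equiv c\bmod d}$, one rewrites the left-hand side as
\[
\frac{1}{d}\sum_{j=0}^{d-1}\zeta^{-jc}\sum_{l=0}^{n}\binom{n}{l}(\alpha\zeta^j)^l(1-\alpha)^{n-l}=\frac{1}{d}\sum_{j=0}^{d-1}\zeta^{-jc}(1-\alpha+\alpha\zeta^j)^n
\]
by the ordinary binomial theorem. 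The $j=0$ term contributes exactly $1/d$, which is the desired main term, so the problem reduces to bounding the contribution of $1\leq j\leq d-1$ by $O(1/\sqrt{\alpha(1-\alpha)n})$ with a constant independent of $c$ and $d$.

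For the non-principal terms I would compute the modulus exactly. Writing $\zeta^j=\cos(2\pi j/d)+i\sin(2\pi j/d)$ and simplifying produces the clean identity
\[
|1-\alpha+\alpha\zeta^j|^2=1-4\alpha(1-\alpha)\sin^2(\pi j/d),
\]
and hence $|(1-\alpha+\alpha\zeta^j)^n|\leq\exp\bigl(-2n\alpha(1-\alpha)\sin^2(\pi j/d)\bigr)$. Pairing the index $j$ with $d-j$ to exploit the symmetry $\sin(\pi(d-j)/d)=\sin(\pi j/d)$, and invoking Jordan's inequality $\sin(\pi j/d)\geq 2j/d$ on $1\leq j\leq d/2$, the total error is at most
\[
\frac{2}{d}\sum_{1\leq j\leq d/2}\exp\!\Big(-\frac{8n\alpha(1-\alpha)}{d^2}\,j^2\Big).
\]

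Finally I would compare this exponential sum with the Gaussian integral $\sum_{j\geq 1}e^{-Aj^2}\leq \tfrac{1}{2}\sqrt{\pi/A}$ at $A:=8n\alpha(1-\alpha)/d^2$: the factors of $d$ cancel exactly, and the bound collapses to $O(1/\sqrt{\alpha(1-\alpha)n})$ with an absolute implied constant, as required. The main technical point, and essentially the only place that requires care, is the sharp identity $|1-\alpha+\alpha\zeta^j|^2=1-4\alpha(1-\alpha)\sin^2(\pi j/d)$: a crude triangle-inequality bound $|1-\alpha+\alpha\zeta^j|\leq 1$ would be far too weak and would forfeit the $\sqrt{\alpha(1-\alpha)}$ factor in the error. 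The cancellation between the prefactor $1/d$ and the effective length $d/2$ of the summation is precisely what makes the implied constant independent of $d$, consistent with the uniform statement of the lemma.
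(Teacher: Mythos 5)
Your proof is correct and complete. Note, however, that the paper itself offers no proof of this lemma to compare against: it simply imports the statement as Lemma 11 of the cited work of Fern\'andez and Fern\'andez, so what you have supplied is a self-contained substitute for an external citation. Your argument is the standard and natural one: the roots-of-unity filter isolates the congruence class, the principal character gives the main term $1/d$ exactly, and the nonprincipal terms are controlled by the exact identity $|1-\alpha+\alpha\zeta^j|^2=1-4\alpha(1-\alpha)\sin^2(\pi j/d)$ (which checks out: expanding gives $(1-\alpha)^2+\alpha^2+2\alpha(1-\alpha)\cos(2\pi j/d)=1-2\alpha(1-\alpha)(1-\cos(2\pi j/d))$), followed by $1-x\le e^{-x}$, Jordan's inequality, and comparison with $\int_0^\infty e^{-Ax^2}\,dx=\tfrac12\sqrt{\pi/A}$; the factor $d$ from the Gaussian tail indeed cancels the prefactor $1/d$, and the bound is uniform in $c$ since $|\zeta^{-jc}|=1$. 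The only cosmetic remark is that the double-counting of the self-paired index $j=d/2$ when $d$ is even is harmless for an upper bound, and the case $d=1$ is trivial. For what it is worth, there is an even more elementary route that avoids complex exponentials entirely: the binomial weights $p_l=\binom{n}{l}\alpha^l(1-\alpha)^{n-l}$ form a unimodal sequence summing to $1$, and for any unimodal nonnegative sequence the sum over a residue class modulo $d$ differs from $\tfrac1d\sum_l p_l$ by at most $\max_l p_l\ll 1/\sqrt{\alpha(1-\alpha)n}$ (by Stirling); but your filter argument is equally rigorous and yields an explicit absolute constant $\sqrt{\pi/8}$.
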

	
	We also need the following well-known result, which can be derived directly by partial summation.
	
	\begin{lemma}\label{lem:other estimates}
		For any $x\geq 1$, we have
		\begin{align}\label{eq:sum_{s<x}s^{beta}=}
			\sum_{1\leq n\leq x}n^{\gamma}=\frac{x^{\gamma+1}}{\gamma+1}+\begin{cases}O_{\gamma}(x^{\gamma}), &{\rm{if}}\ \gamma\geq 0,\\
				O_{\gamma}(1), &{\rm{if}}\ -1<\gamma<0.
			\end{cases}
		\end{align}
		Moreover, if $\beta<-1$, we have 
		\begin{align}\label{eq:sum_{n>D}<<}
			\sum_{n>x }n^{\beta}=O_{\beta}(x^{1+\beta}).
		\end{align}
	\end{lemma}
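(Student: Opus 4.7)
The statement is the standard integral-comparison estimate for power sums, so the plan is to prove it by direct comparison with $\int t^{\gamma}\,dt$, splitting into cases according to the monotonicity of $n^{\gamma}$.

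First I would handle the finite sum $\sum_{1\le n\le x} n^{\gamma}$. Writing $M:=\lfloor x\rfloor$, the key identity is
\[
\sum_{n=1}^{M} n^{\gamma}=\sum_{n=1}^{M}\int_{n-1}^{n}\bigl(n^{\gamma}-t^{\gamma}\bigr)\,dt+\int_{0}^{M} t^{\gamma}\,dt.
\]
The integral evaluates to $M^{\gamma+1}/(\gamma+1)$, and the difference $M^{\gamma+1}-x^{\gamma+1}$ is $O_{\gamma}(x^{\gamma})$ by the mean value theorem, so in both cases the main term comes out to $x^{\gamma+1}/(\gamma+1)$ with an acceptable error. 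For the remainder $\sum_{n=1}^{M}\int_{n-1}^{n}(n^{\gamma}-t^{\gamma})\,dt$, when $\gamma\ge 0$ the integrand is non-negative and bounded by $n^{\gamma}-(n-1)^{\gamma}\ll_{\gamma}n^{\gamma-1}$ (with the convention that the $n=1$ term is just $O_{\gamma}(1)$), giving a telescoping bound of $O_{\gamma}(M^{\gamma})=O_{\gamma}(x^{\gamma})$. When $-1<\gamma<0$ the integrand is non-positive and bounded in absolute value by $(n-1)^{\gamma}-n^{\gamma}$, which again telescopes; the telescoping sum is bounded by $1^{\gamma}=1$, i.e.\ $O_{\gamma}(1)$ uniformly in $x$.

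For the tail sum with $\beta<-1$, I would use the monotonicity of $t^{\beta}$ (decreasing) to write
\[
\sum_{n>x}n^{\beta}\le \sum_{n\ge \lceil x\rceil}n^{\beta}\le \int_{\lceil x\rceil-1}^{\infty} t^{\beta}\,dt=\frac{(\lceil x\rceil-1)^{\beta+1}}{-\beta-1}\ll_{\beta} x^{\beta+1},
\]
where the last step uses $\lceil x\rceil-1\asymp x$ for $x\ge 1$ (treating small $x$ trivially). This delivers the desired $O_{\beta}(x^{\beta+1})$ bound.

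There is no real obstacle here; the only points requiring a little care are the bookkeeping when $\gamma$ is close to $0$ or $-1$ (so the implied constants depend on $\gamma$) and the handling of the boundary term at $n=1$ in the $\gamma\ge 0$ case. Both can be absorbed into the $O_{\gamma}$ notation without difficulty.
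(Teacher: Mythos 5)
Your proof is correct. Note that the paper does not actually write out a proof of this lemma: it merely remarks that the estimates ``can be derived directly by partial summation,'' i.e.\ by Abel summation against $t^{\gamma}$ (writing $\sum_{n\le x}n^{\gamma}=\lfloor x\rfloor x^{\gamma}-\gamma\int_{1}^{x}\lfloor t\rfloor t^{\gamma-1}\,dt$ and replacing $\lfloor t\rfloor$ by $t+O(1)$). Your direct comparison with $\int t^{\gamma}\,dt$ via the identity $\sum_{n\le M}n^{\gamma}=\int_{0}^{M}t^{\gamma}\,dt+\sum_{n\le M}\int_{n-1}^{n}(n^{\gamma}-t^{\gamma})\,dt$ is an equally standard, fully self-contained substitute; the two computations are essentially reorganizations of one another, and yours buys a bit more transparency about where each error term comes from. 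One small point to tidy up: in the case $-1<\gamma<0$ your majorant $(n-1)^{\gamma}-n^{\gamma}$ is infinite at $n=1$, so that term must be bounded directly by the convergent integral $\int_{0}^{1}(1-t^{\gamma})\,dt=1-\frac{1}{\gamma+1}=O_{\gamma}(1)$ before telescoping over $n\ge 2$ (which gives $1^{\gamma}-M^{\gamma}\le 1$); you flag the $n=1$ boundary term only in the $\gamma\ge 0$ case, but it is in the negative range that it genuinely needs separate treatment. The tail estimate for $\beta<-1$ is handled correctly, including the caveat about small $x$ where $\lceil x\rceil-1$ could vanish and make the comparison integral diverge.
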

	
	The following consequence of Lemma \ref{lem:other estimates} will be used several times in the proof of Theorem \ref{thm:S_N=}.
	
	\begin{corollary}
		\label{cor: sums over i,j}
		For any $x\geq 2$, we have
		\begin{itemize}
			\item [(i)]$$\sum_{1\leq i<j\leq x}{i}^{-1/2}=O\big(x^{3/2}\big);
			$$
			\item [(ii)]
			$$ 
			\sum_{1\leq i<j\leq x}i{j}^{-2}=O(x);
			$$
			\item [(iii)]
			$$
			\sum_{1\leq i<j\leq x}({j-i})^{-1/2}=O\big(x^{3/2}\big).
			$$
		\end{itemize}
	\end{corollary}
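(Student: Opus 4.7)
All three estimates are routine consequences of Lemma \ref{lem:other estimates}, obtained either by trivially bounding one variable or by exchanging the order of summation. The plan is to handle each part in a single step; no essential obstacle is anticipated.

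For part (i), I would keep $i$ as the outer variable and bound the inner sum $\sum_{i<j\le x}1$ by $x$. What remains is $x\sum_{1\le i\le x}i^{-1/2}$, and since the exponent $-1/2$ falls in the range $-1<\gamma<0$ of \eqref{eq:sum_{s<x}s^{beta}=}, this sum is $O(x^{1/2})$, which yields the claimed $O(x^{3/2})$.

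For part (ii), I would swap the order of summation so that $j$ becomes the outer index, rewriting the expression as $\sum_{2\le j\le x}j^{-2}\sum_{1\le i<j}i$. Applying \eqref{eq:sum_{s<x}s^{beta}=} with $\gamma=1$ to the inner sum gives $\sum_{1\le i<j}i=O(j^{2})$, so the outer sum collapses to $O\bigl(\sum_{2\le j\le x}1\bigr)=O(x)$. For part (iii), I would make the change of variables $m=j-i$; for each admissible $m\in[1,x-1]$ the number of valid pairs is at most $x$, so the whole sum is bounded by $x\sum_{1\le m\le x}m^{-1/2}$, which by the same case of \eqref{eq:sum_{s<x}s^{beta}=} used in (i) is $O(x^{3/2})$.

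The only mild subtlety is choosing the right index to keep fixed and which to sum over first in each part, but in every case the reduction to Lemma \ref{lem:other estimates} is immediate. I would not expect any truly difficult step here.
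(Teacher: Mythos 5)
Your proof is correct and follows essentially the same route as the paper's: parts (i) and (iii) are argued identically, and in part (ii) you merely take $j$ as the outer index and apply the partial-sum estimate \eqref{eq:sum_{s<x}s^{beta}=} with $\gamma=1$, whereas the paper keeps $i$ outside and uses the tail estimate \eqref{eq:sum_{n>D}<<} with $\beta=-2$; both reductions are immediate and yield the same bounds.
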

	
	\begin{proof}
		For the first statement, by \eqref{eq:sum_{s<x}s^{beta}=}, we obtain
		$$
		\sum_{1\leq i<j\leq x}{i}^{-1/2}=\sum_{1\leq i\leq x}{i}^{-1/2}\sum_{i<j\leq x}1\ll x\sum_{1\leq i\leq x}{i}^{-1/2}\ll x^{3/2}.
		$$
		For the second statement, we apply \eqref{eq:sum_{n>D}<<} and obtain
		$$
		\sum_{1\leq i<j\leq x}i{j}^{-2}=\sum_{1\leq i\leq x}{i}\sum_{j>i}{j}^{-2}\ll \sum_{1\leq i\leq x}1\ll x.
		$$
		For the last statement, changing variable $t=j-i$, we have
		\[
		\sum_{1\leq i<j\leq x}({j-i})^{-1/2}=\sum_{1\leq i\leq x}\sum_{i<j\leq x}({j-i})^{-1/2}=\sum_{1\leq i\leq x}\sum_{1\leq t\leq x-i}t^{-1/2}.
		\]
		Then our desired result follows again from \eqref{eq:sum_{s<x}s^{beta}=}.
	\end{proof}
	
	The following result is Lemma 2.5 from \cite{CFF}, which is essentially the second moment method in probability theory.
	
	\begin{lemma}\label{lem: the second moment method}
		
		Let $(Y_i)_{i\geq1}$ be a sequence of uniformly bounded random variables such that
		\[
		\lim_{N\rightarrow\infty}\mathbb{E}(\overline{R}_N)=\sigma,
		\]
		where
		$$
		\overline{R}_N=\frac{1}{N}\sum_{1\leq i\leq N}Y_i.
		$$
		If there exists a constant $\delta>0$ such that the variance $\mathbb{V}(\overline{R}_N)=O(N^{-\delta})$ for $N\geq1$, then we have
		$$
		\lim_{N\rightarrow\infty}\overline{R}_N=\sigma
		$$
		almost surely.
	\end{lemma}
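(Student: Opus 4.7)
The plan is to deploy the standard two-step Chebyshev/Borel--Cantelli argument: first establish convergence along a sparse deterministic subsequence $\{N_k\}$, then use the uniform boundedness hypothesis to interpolate and fill in the gaps between consecutive $N_k$.

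First I would apply Chebyshev's inequality: for each $\epsilon > 0$,
\[
\mathbb{P}\bigl(|\overline{R}_N - \mathbb{E}(\overline{R}_N)| > \epsilon\bigr) \leq \frac{\mathbb{V}(\overline{R}_N)}{\epsilon^2} = O_{\epsilon}(N^{-\delta}).
\]
Since $\delta$ is permitted to be less than $1$, the right-hand side need not be summable over $N \in \mathbb{N}$, so I would pass to a sparse subsequence. Fix any integer $m$ with $m\delta > 1$ and set $N_k := k^m$, so that $\sum_{k \geq 1} N_k^{-\delta} < \infty$. Applying Borel--Cantelli once for each rational $\epsilon > 0$ and intersecting the resulting countable collection of almost-sure events yields
\[
\overline{R}_{N_k} - \mathbb{E}(\overline{R}_{N_k}) \longrightarrow 0 \quad \text{almost surely,}
\]
which, combined with the hypothesis $\mathbb{E}(\overline{R}_{N_k}) \to \sigma$, gives $\overline{R}_{N_k} \to \sigma$ almost surely.

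To upgrade this to convergence along all of $\mathbb{N}$, let $M$ be a uniform bound for $|Y_i|$, and for any $N$ pick the unique $k$ with $N_k \leq N < N_{k+1}$. Writing $S_n := Y_1 + \cdots + Y_n$ and expanding
\[
\overline{R}_N - \overline{R}_{N_k} = S_{N_k}\Bigl(\frac{1}{N} - \frac{1}{N_k}\Bigr) + \frac{S_N - S_{N_k}}{N},
\]
the triangle inequality together with $|S_n| \leq Mn$ yields
\[
\bigl|\overline{R}_N - \overline{R}_{N_k}\bigr| \leq \frac{2M(N_{k+1}-N_k)}{N_k}.
\]
Because $N_{k+1}/N_k = (1 + 1/k)^m \to 1$, this deterministic bound tends to $0$, and hence $\overline{R}_N \to \sigma$ almost surely as well.

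The only real obstacle is a tension in the choice of subsequence: $\{N_k\}$ must be sparse enough that $\sum_k N_k^{-\delta}$ converges (forcing $N_k$ to grow faster than $k^{1/\delta}$), yet dense enough that $N_{k+1}/N_k \to 1$ (so that the uniform-boundedness interpolation closes). The polynomial choice $N_k = k^m$ with $m > 1/\delta$ meets both requirements simultaneously, which is why this standard recipe succeeds here.
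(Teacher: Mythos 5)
Your proof is correct and complete: the Chebyshev--Borel--Cantelli argument along the polynomially spaced subsequence $N_k=k^m$ with $m\delta>1$, followed by the deterministic interpolation $|\overline{R}_N-\overline{R}_{N_k}|\leq 2M(N_{k+1}-N_k)/N_k\to 0$, is exactly the standard proof of this second-moment lemma. The paper itself gives no proof, citing it as Lemma 2.5 of \cite{CFF}, and your argument is essentially the one found there.
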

	
	\section{An arithmetic function} 
	
	Given integers $k\geq 2$, $a,\ b\geq 1$ with $a\neq b$, $r\geq 0$, we define an arithmetic function $f$ by
	
	\begin{align}\label{def: f(i)=}
		f(i)=f_{k,a,b,r}(i):=\sum_{\substack{1\leq d\leq (ai+r)^{1/k}\\ \gcd(a-b,d^k)\mid bi+r}}\frac{\mu(d)\gcd(a-b,d^k)}{d^k},
	\end{align}
	where $\mu$ is the M\"{o}bius function. Trivially, for $a\neq b$ and $k\geq 2$, we have
	
	\begin{align}\label{eq: upper bound of f(i)}
		f(i)=O_{a,b}(1).
	\end{align}
	
	The following lemma gives an asymptotic formula for the the mean value of $f$.
	
	\begin{lemma}\label{lem:mean value of f}
		
		With the above notations, for any $N\geq 1$, we have
		$$
		\sum_{1\leq i\leq N}f(i)=\theta_k(a,b,r)N+O(N^{\frac{1}{k}}),
		$$
		where $\theta_k(a,b,r)$ is given by \eqref{def: theta(k,a,b,r)} and the implied constant in the $O$-symbol depends on $k,\ a,\ b$ and $r$.
	\end{lemma}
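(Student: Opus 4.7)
The plan is to prove Lemma 3.1 by interchanging the order of summation (sum over $d$ first, then $i$), identifying the resulting Dirichlet-type series with an Euler product that matches $\theta_k(a,b,r)$.

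First I would swap the two sums. Setting $D_N := (aN+r)^{1/k}$ and $G(d) := \gcd(a-b,d^k)$, the constraint $d\leq (ai+r)^{1/k}$ becomes $i\geq (d^k-r)/a$, so
\[
\sum_{1\leq i\leq N}f(i)=\sum_{1\leq d\leq D_N}\frac{\mu(d)G(d)}{d^k}\#\!\left\{i:\,i_0(d)\leq i\leq N,\ G(d)\mid bi+r\right\},
\]
where $i_0(d):=\max\bigl(1,\lceil(d^k-r)/a\rceil\bigr)$. The congruence $bi\equiv -r\pmod{G(d)}$ has solutions if and only if $\gcd(b,G(d))\mid r$, and in that case the solutions form a single residue class modulo $G(d)/\gcd(b,G(d))$. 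The key arithmetic identity
\[
\gcd\!\bigl(b,\gcd(a-b,d^k)\bigr)=\gcd(b,a-b,d^k)=\gcd(a,b,d^k)
\]
then lets me count: the number of admissible $i$ equals $(N-i_0(d)+1)\gcd(a,b,d^k)/G(d)+O(1)$ when $\gcd(a,b,d^k)\mid r$, and is $0$ otherwise.

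Substituting back, the main term becomes $N\sum_{d\leq D_N}F(d)$, where
\[
F(d):=\frac{\mu(d)\gcd(a,b,d^k)}{d^k}\cdot\bigl[\gcd(a,b,d^k)\mid r\bigr].
\]
I would then verify that $F$ is multiplicative (each of $\mu(d)$, $d^k$, $\gcd(a,b,d^k)$, and the Iverson bracket factors across coprime $d_1,d_2$, since a product of coprime divisors divides $r$ iff each factor does). Because $F$ is supported on squarefree integers, the Euler product collapses to
\[
\sum_{d=1}^{\infty}F(d)=\prod_{p}\Bigl(1+F(p)\Bigr)=\prod_{\substack{p\\ \gcd(a,b,p^k)\mid r}}\!\Bigl(1-\frac{\gcd(a,b,p^k)}{p^k}\Bigr)=\theta_k(a,b,r),
\]
which is exactly the target constant.

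Finally, I would bound all error terms using the trivial estimate $|F(d)|\leq \gcd(a,b)/d^k$ and $G(d)\leq|a-b|$. The tail $\sum_{d>D_N}|F(d)|\ll D_N^{1-k}$ contributes $O(N\cdot N^{(1-k)/k})=O(N^{1/k})$; the shift $i_0(d)-1\leq d^k/a$ contributes $\sum_{d\leq D_N}|F(d)|d^k/a\ll D_N=O(N^{1/k})$; and the $O(1)$ error from counting lattice points in an arithmetic progression contributes $\sum_{d\leq D_N}G(d)/d^k\ll_{a,b}\sum_d d^{-k}=O(1)$. Summing all bounds gives the claimed $O(N^{1/k})$ remainder. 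I expect the main obstacle to be purely bookkeeping, specifically tracking the two-layer gcd identity $\gcd(b,G(d))=\gcd(a,b,d^k)$ and confirming that the multiplicativity of $F$ survives the Iverson bracket; once these are in hand, the analytic estimates are entirely routine consequences of Lemma \ref{lem:other estimates}.
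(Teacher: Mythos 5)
Your proposal is correct and follows essentially the same route as the paper: interchange the two sums, reduce the inner count to a congruence $bi\equiv -r\pmod{\gcd(a-b,d^k)}$ solvable precisely when $\gcd(a,b,d^k)\mid r$, count the residue class, complete the sum over $d$ to an Euler product equal to $\theta_k(a,b,r)$, and absorb the tail into $O(N^{1/k})$. The only difference is cosmetic: you track the lower cutoff $i_0(d)$ explicitly (a point the paper's write-up glosses over, though it only costs $O(N^{1/k})$ either way), and you spell out the multiplicativity behind the Euler product, which the paper asserts without comment.
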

	
	\begin{proof} Without loss of generality, we may assume $a>b$. Through out this proof, the implied constants in the symbols $O$ and $\ll$ depend at most on $k,\ a,\ b$ and $r$.
		
		By the definition of $f$, we have
		$$
		\sum\limits_{1\leq i\leq N}f(i)=\sum\limits_{1\leq i\leq N}\sum_{\substack{1\leq d\leq (ai+r)^{1/k}\\ \gcd(a-b,d^k)\mid bi+r}}\frac{\mu(d)\gcd(a-b,d^k)}{d^k}.
		$$
		Changing the order of summations, we obtain
		$$
		\sum_{1\leq i\leq N}f(i)=\sum_{\substack{1\leq d\le (aN+r)^{1/k}}}\frac{\mu(d)\gcd(a-b,d^k)}{d^k}\sum_{\substack{1\leq i\leq N\\ bi\equiv -r\bmod\gcd(a-b,d^k)}}1.
		$$
		Note that the congruence equation
		\[
		bi\equiv -r \bmod \gcd(a-b,d^k)
		\]
		has integer solutions if and only if $\gcd(b,\gcd(a-b,d^k))=\gcd(a,b,d^k)\mid r$, and these solutions forms a residue class
		\[
		i\equiv c \bmod h(d)
		\]
		for some $c=c(a,b,k,d,r)\in\mathbb{Z}$, where
		\[
		h(d)=h(d;k,a,b):=\frac{\gcd(a-b,d^k)}{\gcd(a,b,d^k)}.
		\]
		Then we have
		\begin{align*}
			\sum_{1\leq i\leq N}f(i)=&\sum_{\substack{1\leq d\le (aN+r)^{1/k}\\ \gcd(a,b,d^k)\mid r}}\frac{\mu(d)\gcd(a-b,d^k)}{d^k}\sum_{\substack{1 \leq i\leq N\\i\equiv c \bmod h(d)}}1\\
			=&\sum_{\substack{1\leq d\le (aN+r)^{1/k}\\ \gcd(a,b,d^k)\mid r}}\frac{\mu(d)\gcd(a-b,d^k)}{d^k}\Big(\frac{N}{h(d)}+O(1)\Big).
		\end{align*}
		Since $k\geq 2$, $a>b$ and $|\mu(d)|\leq 1$ for any positive integer $d$, the total contribution of the above $O(1)$ term is
		\[
		\ll \sum\limits_{1\leq d\leq (aN+r)^{1/k}}\frac{1}{d^k}\ll 1.
		\]
		It follows that
		\begin{align*}
			\sum_{1\leq i\leq N}f(i)&=N\sum_{\substack{1\leq d\le (aN+r)^{1/k}\\\gcd(a,b,d^k)\mid r}}\frac{\mu(d)\gcd(a-b,d^k)}{d^kh(d)}+O(1)\\
			&=N\sum_{\substack{1\leq d\le (aN+r)^{1/k}\\\gcd(a,b,d^k)\mid r}}\frac{\mu(d)\gcd(a,b,d^k)}{d^k}+O(1).
		\end{align*}		
		Extending the range of $d$, we obtain
		
		\begin{align*}
			\sum_{1\leq i\leq N}f(i)&=N\sum_{\substack{d=1\\ \gcd(a,b,d^k)\mid r}}^\infty\frac{\mu(d)\gcd(a,b,d^k)}{d^k}+O\Big(N\sum_{d>(Na+r)^{1/k}}\frac{1}{d^k}\Big)+O(1)\\
			&=N\sum_{\substack{d=1\\ \gcd(a,b,d^k)\mid r}}^\infty\frac{\mu(d)\gcd(a,b,d^k)}{d^k}+O(N^{\frac{1}{k}}).
		\end{align*}
		
		Then our desired result follows from the Euler product
		\begin{align*}
			\sum_{\substack{d=1\\ \gcd(a,b,d^k)\mid r}}^\infty\frac{\mu(d)\gcd(a,b,d^k)}{d^k}=\prod_{\substack{p\\ \gcd(a,b,p^k)\mid r}}\Big(1-\frac{\gcd(a,b,p^k)}{p^k}\Big).
		\end{align*}
	\end{proof}

	\section{Proof of Theorem \ref{thm:S_N=}} \label{sec: proof of our theorem}
	
	In this section, we prove Theorem \ref{thm:S_N=} by applying Lemma \ref{lem: the second moment method}. This requires us to compute the expectation $\mathbb{E}(\overline{S}_k(N))$ and estimate the variance $\mathbb{V}(\overline{S}_k(N))$.
	\begin{lemma}\label{lem: E(S_N) and V(S_N)}
		For any integer $N\geq 1$, we have
		\begin{align}\label{eq: E(s)}
			\mathbb{E}(\overline{S}_k(N))=\theta_k(a,b,r)+O\Big(\frac{N^{1/k-1/2}}{\sqrt{\alpha(1-\alpha)}}\Big) 
		\end{align}
		and
		\begin{align}\label{eq: V(s)}
			\mathbb{V}(\overline{S}_k(N))=O\Big(\frac{N^{1/k-1/2}}{{\alpha(1-\alpha)}}\Big),
		\end{align}
		where $\theta_k(a,b,r)$ is the constant defined by \eqref{def: theta(k,a,b,r)}, and the implied constant in the $O$-symbol depends on $k,\ a,\ b$ and $r$.
	\end{lemma}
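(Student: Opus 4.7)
The plan is to derive both estimates by M\"{o}bius inversion, converting each probability into a congruence statement that is then handled by Lemma \ref{lem:sum_{i=r(d^k)}=}. Assume without loss of generality $a > b$ and write $P_i = r + bi + (a-b)L_i$, where $L_i := \#\{1 \le j \le i : W_j = a\}$ is $\mathrm{Binomial}(i,\alpha)$-distributed. For a positive integer $d$, the event $d^k \mid P_i$ is equivalent to the congruence $(a-b) L_i \equiv -(r + bi) \pmod{d^k}$. Setting $g_d := \gcd(a-b,d^k)$, this is solvable iff $g_d \mid r+bi$, in which case $L_i$ is forced into a single residue class modulo $d^k/g_d$. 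Lemma \ref{lem:sum_{i=r(d^k)}=} then yields
\[
\mathbb{P}(d^k \mid P_i) = \frac{g_d}{d^k} + O\Big(\frac{1}{\sqrt{\alpha(1-\alpha)\,i}}\Big)
\]
under this compatibility condition (and zero otherwise), uniformly in $d$.

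For the expectation, invoke the M\"{o}bius identity for the $k$-free indicator to write $\mathbb{E}(X_i) = \sum_{1 \le d \le (ai+r)^{1/k}} \mu(d)\,\mathbb{P}(d^k \mid P_i)$. Substituting the probability expansion above reproduces exactly $f(i)$ from \eqref{def: f(i)=} plus an error bounded by $\ll (ai+r)^{1/k}/\sqrt{\alpha(1-\alpha)\,i} \ll i^{1/k-1/2}/\sqrt{\alpha(1-\alpha)}$. Summing over $1 \le i \le N$ via Lemma \ref{lem:mean value of f} for the main term and Lemma \ref{lem:other estimates} for the error, then dividing by $N$, yields \eqref{eq: E(s)}.

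For the variance, decompose $\mathbb{V}(\overline{S}_k(N)) = N^{-2}\sum_{i=1}^N \mathbb{V}(X_i) + 2N^{-2}\sum_{1 \le i < j \le N}\mathrm{Cov}(X_i, X_j)$, where the diagonal contributes only $O(1/N)$ since $X_i\in\{0,1\}$. The key step for the off-diagonal is the independence decomposition $L_j = L_i + M$ with $M \sim \mathrm{Binomial}(j-i,\alpha)$ independent of $L_i$. Conditioning on $L_i$, the divisibility $d_2^k \mid P_j$ becomes a pure congruence on $M$ whose solvability condition $g_{d_2}\mid r+bj$ is independent of $L_i$; applying Lemma \ref{lem:sum_{i=r(d^k)}=} to $M$ yields the conditional probability $g_{d_2}/d_2^k + O(1/\sqrt{\alpha(1-\alpha)(j-i)})$, uniformly in $L_i$. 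Taking expectation in $L_i$ and subtracting the product $\mathbb{P}(d_1^k\mid P_i)\mathbb{P}(d_2^k\mid P_j)$, the $g_{d_2}/d_2^k$ main terms cancel and the remainder has size $\mathbb{P}(d_1^k \mid P_i) \cdot O\big((\alpha(1-\alpha))^{-1/2}((j-i)^{-1/2}+j^{-1/2})\big)$.

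After M\"{o}bius inversion in both variables, the sum over $d_2 \le (aj+r)^{1/k}$ contributes $\ll j^{1/k}$, while $\sum_{d_1}|\mu(d_1)|\mathbb{P}(d_1^k \mid P_i) \ll 1/\sqrt{\alpha(1-\alpha)}$ (using $\mathbb{P}(d_1^k\mid P_i) \le g_{d_1}/d_1^k + O(1/\sqrt{\alpha(1-\alpha)\,i})$ together with $k \ge 3$). This yields
\[
|\mathrm{Cov}(X_i, X_j)| \ll \frac{j^{1/k}\big((j-i)^{-1/2}+j^{-1/2}\big)}{\alpha(1-\alpha)}.
\]
Summing over $1 \le i < j \le N$ via Corollary \ref{cor: sums over i,j}(iii) together with Lemma \ref{lem:other estimates} for the $j^{-1/2}$ piece bounds each contribution by $N^{1/k+3/2}/(\alpha(1-\alpha))$, and dividing by $N^2$ delivers \eqref{eq: V(s)}. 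The principal obstacle is controlling the joint probability $\mathbb{P}(d_1^k \mid P_i,\, d_2^k \mid P_j)$: without the $L_j = L_i + M$ decomposition one would face a two-dimensional restricted binomial sum with both indices correlated, whereas the decomposition, combined with the uniformity of the error in Lemma \ref{lem:sum_{i=r(d^k)}=}, reduces it to a product-form expression to which that lemma applies directly in each variable.
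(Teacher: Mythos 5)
Your proof is correct. The expectation computation is essentially the paper's: both write $P_i=r+bi+(a-b)L_i$ with $L_i$ binomial, perform M\"{o}bius inversion over $d\le (ai+r)^{1/k}$, apply Lemma \ref{lem:sum_{i=r(d^k)}=} to the resulting congruence-restricted binomial sum to recover $f(i)$ with error $i^{1/k-1/2}/\sqrt{\alpha(1-\alpha)}$, and then invoke Lemma \ref{lem:mean value of f}. For the variance the two arguments share the same core inputs (the independent-increment decomposition $L_j=L_i+M$ and the uniformity in the residue class of the error in Lemma \ref{lem:sum_{i=r(d^k)}=}), but the bookkeeping is genuinely different. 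The paper evaluates $\mathbb{E}(X_iX_j)$ asymptotically as $f(i)f(j)+O(\cdot)$ by two successive applications of Lemma \ref{lem:sum over k free numers}, then completes the square over $\sum_i f(i)$ and subtracts $\mathbb{E}(\overline{S}_k(N))^2$; this forces it to control an extra error term $ij^{-2+1/k}$ arising from the $l$-dependent upper limit $(aj+r-(a-b)(i-l))^{1/k}$ of the inner $d$-sum, and to use parts (i) and (ii) of Corollary \ref{cor: sums over i,j} together with a second appeal to Lemma \ref{lem:mean value of f}. You instead bound $\mathrm{Cov}(X_i,X_j)$ directly: fixing the M\"{o}bius ranges $d_1\le(ai+r)^{1/k}$ and $d_2\le(aj+r)^{1/k}$ deterministically (legitimate, since $P_j\le aj+r$ always) before conditioning on $L_i$ makes the main terms $g_{d_2}/d_2^k$ cancel exactly against the product of marginals, so no asymptotic evaluation of $\sum_i f(i)$ is needed for the variance and the shifting-range issue never appears. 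Both routes land on the same bound $N^{1/k-1/2}/(\alpha(1-\alpha))$; your organization is slightly cleaner at the price of having to check that the solvability condition $g_{d_2}\mid r+bj$ is independent of the conditioning on $L_i$, which you do correctly.
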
	
	
	Combining Lemma \ref{lem: E(S_N) and V(S_N)} with Lemma \ref{lem: the second moment method}, we obtain Theorem \ref{thm:S_N=}.
	
	Now we only need to prove Lemma \ref{lem: E(S_N) and V(S_N)}. We complete this task in the following three subsections.
	
	\subsection{A sum with $k$-free restriction}\label{subsec: a sum with a k-free restriction}
	
	In this subsection, we prove the following key lemma.
	
	\begin{lemma}\label{lem:sum over k free numers}
		For any integers $n,\ u,\ v\geq 1$ and any $\alpha\in(0,1)$, we have
		$$
		\sum_{\substack{0\leq m\leq n\\ um+v\ {\rm{is}}\ k\text{-}{\rm{free}}}}\binom{n}{m}\alpha^{m}(1-\alpha)^{n-m}= M_k(n,u,v,d)+O\Big(\frac{(un+v)^{1/k}}{\sqrt{\alpha(1-\alpha)n}}\Big),
		$$
		where
		$$
		M_k(n,u,v,d):=\sum_{\substack{1\leq d\leq (un+v)^{1/k}\\ \gcd(u,d^k)\mid v}}\frac{\mu(d)\gcd(u,d^k)}{d^k},
		$$
		and the implied constant in the $O$-symbol is absolute.
	\end{lemma}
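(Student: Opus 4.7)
The natural approach is to detect the $k$-free condition via the M\"obius-type identity
\[
\mathbf{1}_{n\ \text{is}\ k\text{-free}}=\sum_{d^k\mid n}\mu(d),
\]
apply it to $n=um+v$, and then swap the order of summation. Since $d^k\mid um+v$ forces $d^k\leq un+v$, the outer sum is restricted to $1\leq d\leq (un+v)^{1/k}$. This converts the left-hand side into
\[
\sum_{1\leq d\leq (un+v)^{1/k}}\mu(d)\sum_{\substack{0\leq m\leq n\\ um\equiv -v\bmod d^k}}\binom{n}{m}\alpha^{m}(1-\alpha)^{n-m}.
\]

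The second step is to analyze the inner congruence $um\equiv -v\pmod{d^k}$. Standard linear congruence theory tells us that this has solutions in $m$ precisely when $\gcd(u,d^k)\mid v$, and when it does, the solutions form a single residue class modulo $d^k/\gcd(u,d^k)$. Hence for such $d$ we can apply Lemma \ref{lem:sum_{i=r(d^k)}=} with modulus $d^k/\gcd(u,d^k)$ to obtain
\[
\sum_{\substack{0\leq m\leq n\\ um\equiv -v\bmod d^k}}\binom{n}{m}\alpha^{m}(1-\alpha)^{n-m}=\frac{\gcd(u,d^k)}{d^k}+O\!\Big(\frac{1}{\sqrt{\alpha(1-\alpha)n}}\Big).
\]
For $d$ with $\gcd(u,d^k)\nmid v$, the inner sum is empty and contributes nothing, which matches the divisibility restriction imposed on $M_k(n,u,v,d)$.

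The third step is to collect terms. Multiplying by $\mu(d)$ and summing over $d\leq (un+v)^{1/k}$ with $\gcd(u,d^k)\mid v$, the main terms assemble exactly into $M_k(n,u,v,d)$. For the error, $|\mu(d)|\leq 1$ gives a total contribution
\[
\ll \frac{1}{\sqrt{\alpha(1-\alpha)n}}\sum_{1\leq d\leq (un+v)^{1/k}}1\ll \frac{(un+v)^{1/k}}{\sqrt{\alpha(1-\alpha)n}},
\]
which is precisely the claimed error term.

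Overall the proof is fairly mechanical: the only step that requires care is the congruence analysis identifying when $um\equiv -v\bmod d^k$ is solvable and pinning down the corresponding modulus $d^k/\gcd(u,d^k)$ used in Lemma \ref{lem:sum_{i=r(d^k)}=}; without this step the main term would not collapse to the Euler-product-flavored sum $M_k(n,u,v,d)$. Everything else is a swap of summations and a trivial count of the outer $d$-range. I do not anticipate any serious obstacle.
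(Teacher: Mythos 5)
Your proposal is correct and follows essentially the same route as the paper: Möbius detection of the $k$-free condition, interchange of summation, solvability analysis of $um\equiv -v\bmod d^k$ with modulus $d^k/\gcd(u,d^k)$, and an application of Lemma \ref{lem:sum_{i=r(d^k)}=} followed by a trivial count of the $d$-range to bound the error. No issues.
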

	
	To prove Lemma \ref{lem:sum over k free numers}, we first simplify our notations and denote
	$$
	I(n):=\sum_{\substack{0\leq m\leq n\\
			um+v\ {\rm{is}}\ k\text{-}{\rm{free}}}}\binom{n}{m}\alpha^{m}(1-\alpha)^{n-m}.
	$$
	Let $\mu_k$ be the characteristic function of $k$-free numbers. It is well-known that
	$$
	\mu_k(n)=\sum_{d^k\mid n}\mu(d),
	$$
	where $\mu$ is the M\"{o}bius function. Then we have
	$$
	I(n)=\sum_{0\leq m\leq n}\binom{n}{m}\alpha^{m}(1-\alpha)^{n-m}\sum_{d^k\mid um+v}\mu(d).
	$$
	Changing the order of summations, we obtain
	$$
	I(n)=\sum_{1\leq d\leq (un+v)^{1/k}}\mu(d)\sum_{\substack{0\leq m\leq n\\ um\equiv -v\bmod d^k}}\binom{n}{m}\alpha^{m}(1-\alpha)^{n-m}.
	$$
	Observe that the congruence equation
	$$
	um\equiv-v \bmod d^k
	$$
	has integer solutions if and only if $\gcd(u,d^k)\mid v$, in which case, the solutions form a residue class
	$$
	m\equiv c \bmod d^k/\gcd(u,d^k)
	$$
	for some $c=c(u,v,k,d)\in\mathbb{Z}$. Then applying Lemma \ref{lem:sum_{i=r(d^k)}=} to the sum over $m$, we obtain
	
	$$
	I(n)=\sum_{\substack{1\leq d\leq (un+v)^{1/k}\\ \gcd(u,d^k)\mid v}}\mu(d)\bigg(\frac{\gcd(u,d^k)}{d^k}+O\Big(\frac{1}{\sqrt{\alpha(1-\alpha)n}}\Big)\bigg),
	$$
	which implies Lemma \ref{lem:sum over k free numers}.

	\subsection{The expectation}
	In this subsection, our goal is to prove \eqref{eq: E(s)} in Lemma \ref{lem: E(S_N) and V(S_N)} by using Lemma \ref{lem:sum over k free numers}. Without loss of generality, we always assume $a>b$. Through out this subsection, the implied constants in symbols $O$ and $\ll$ depend at most on $k,\ a,\ b$ and $r$.
	
	By the definition of $X_i$, we have
	$$
	\mathbb{E}(X_i)=\mathbb{P}(P_i\ {\rm{is}}\ k\text{-}{\rm{free}}).
	$$
	In the $\alpha$-random walk we consider, the coordinate $P_i$ of the $i$th step is of the form
	$$
	P_i=r+la+(i-l)b
	$$
	for some $0\leq l\leq i$. For each $l$, it is not difficult to see that
	$$
	\mathbb{P}\big(P_i=r+la+(i-l)b\big)=\binom{i}{l}\alpha^{l}(1-\alpha)^{i-l}.
	$$
	Therefore, we have
	\begin{align}\label{eq:E(X_i)=}
		\mathbb{E}(X_i)=\sum_{\substack{0\leq l\leq i\\ (a-b)l+ib+r\ {\rm{is}}\ k\text{-}{\rm{free}}}}\binom{i}{l}\alpha^{l}(1-\alpha)^{i-l}.
	\end{align}
	
	Applying Lemma \ref{lem:sum over k free numers} with $n=i,u=a-b$ and $v=ib+r$, we obtain
	\begin{equation}\label{eq:E(X)}
		\mathbb{E}(X_i)=f(i)+O\Big(\frac{i^{1/k-1/2}}{\sqrt{\alpha(1-\alpha)}} \Big),
	\end{equation}
	where $f(i)=f_{k,a,b,r}(i)$ is defined by \eqref{def: f(i)=}. Summing over $i$ and using \eqref{eq:sum_{s<x}s^{beta}=} to bound the error term, we obtain
	$$
	\mathbb{E}(\overline{S}_k(N))=\frac{1}{N}\sum_{1\leq i\leq N}f(i)+O\Big(\frac{N^{1/k-1/2}}{\sqrt{\alpha(1-\alpha)}} \Big).
	$$
	This combining with Lemma \ref{lem:mean value of f} establishes \eqref{eq: E(s)}.
	\subsection{The variance}
	
	In this subsection, we prove \eqref{eq: V(s)} in Lemma \ref{lem: E(S_N) and V(S_N)}. Again, without loss of generality, we assume $a>b$. Through out this subsection, the implied constants in symbols $O$ and $\ll$ depend at most on $k,\ a,\ b$ and $r$.
	
	Note that the variance
	\begin{align}\label{eq:V(s(n))=E(s(n))^2-(E(s(n)))^2}
		\mathbb{V}(\overline{S}_k(N))=\mathbb{E}\big(\overline{S}_k(N)^2\big)-\mathbb{E}(\overline{S}_k(N))^2.
	\end{align}
	For $\mathbb{E}(\overline{S}_k(N))^2$, it follows from \eqref{eq: E(s)} and the estimate $\sqrt{\alpha(1-\alpha)}\geq \alpha(1-\alpha)$ that
	\begin{align}\label{eq:(E(s(n)))^2=}
		\mathbb{E}(\overline{S}_k(N))^2=\theta(k,a,b,r)^2+O\Big(\frac{N^{1/k-1/2}}{{\alpha(1-\alpha)}}\Big).		\end{align}
	To deal with $\mathbb{E}\big(\overline{S}_k(N)^2\big)$, by the definition of $\overline{S}_k(N)$, we write
	
	\begin{align}\label{eq:E((s(n))^2)=sum_E(X_iX_j)+sum_E(X_i^2)}
		\mathbb{E}\big(\overline{S}_k(N)^2\big)=\frac{2}{N^2}\sum_{1\leq i<j\leq N}\mathbb{E}(X_iX_j)+\frac{1}{N^2}\sum_{1\leq i\leq N}\mathbb{E}({X_i}^2).
	\end{align}

	For the second term on the right hand side of \eqref{eq:E((s(n))^2)=sum_E(X_iX_j)+sum_E(X_i^2)}, we have
	\begin{equation}\label{eq:E(X^2)}
		\frac{1}{N^2}\sum_{1\leq i\leq N}\mathbb{E}({X_i}^2)=\frac{1}{N^2}\sum_{1\leq i\leq N}\mathbb{E}(X_i)=O(N^{-1}),
	\end{equation}
	which is acceptable.
	
	Now we consider the first term on the right hand side of \eqref{eq:E((s(n))^2)=sum_E(X_iX_j)+sum_E(X_i^2)}. Note that
	$$
	\mathbb{E}(X_iX_j)=\mathbb{P}(P_i\ {\rm{and}}\ P_j\ {\rm{are\ both}}\ k\text{-}{\rm{free}}).
	$$
	For $i<j$, the probability that $P_i=r+la+(i-l)b$ and $P_j=P_i+ha+(j-i-h)b$ for some $0\leq l\leq i$ and $0\leq h\leq j-i$ is equal to
	$$
	\binom{i}{l}\alpha^l(1-\alpha)^{i-l}\binom{j-i}{h}\alpha^h(1-\alpha)^{j-i-h}.
	$$
	Hence, similar to \eqref{eq:E(X_i)=}, we have
	$$
	\mathbb{E}(X_iX_j)=\sum_{\substack{0\leq l\leq i\\ r+(a-b)l+bi\ {\rm{is}}\ k\text{-}{\rm{free}}}}\binom{i}{l}\alpha^l(1-\alpha)^{i-l}\sum_{\substack{0\leq h\leq j-i\\ (a-b)h+(a-b)l+bj+r\ {\rm{is}}\ k\text{-}{\rm{free}}}}\binom{j-i}{h}\alpha^h(1-\alpha)^{j-i-h}.
	$$
	
	Applying Lemma \ref{lem:sum over k free numers} with $n=j-i,\ u=a-b$ and $v=(a-b)l+bj+r$, the inner sum over $h$ is equal to
	$$
	\sum\limits_{\substack{1\leq d\le (aj+r-(a-b)(i-l))^{1/k}\\ \gcd(a-b,d^k)\mid bj+r}}\frac{\mu(d)\gcd(a-b,d^k)}{d^k}+O\Big(\frac{({j-i})^{-1/2}j^{1/k}}{\sqrt{\alpha(1-\alpha)}}\Big).
	$$
	Note that
	\begin{align*}
		(aj+r-(a-b)(i-l))^{1/k}&=(aj+r)^{1/k}\bigg(1+O\Big(\frac{(a-b)(i-l)}{aj+r}\Big)\bigg)\\
		&=(aj+r)^{1/k}\big(1+O(ij^{-1})\big).
	\end{align*}
	Then we have
	$$
	\sum\limits_{\substack{(aj+r-(a-b)(i-l))^{1/k}<d\le (aj+r)^{1/k}\\ \gcd(a-b,d^k)\mid bj+r}}\frac{\mu(d)\gcd(a-b,d^k)}{d^k}\ll ij^{-2+1/k}.
	$$
	Thus, the inner sum over $h$ is equal to
	\[
	f(j)+O\Big(ij^{-2+1/k}+\frac{({j-i})^{-1/2}j^{1/k}}{\sqrt{\alpha(1-\alpha)}}\Big).
	\]
	It follows that
	\begin{align*}
		\mathbb{E}(X_iX_j)&=\sum_{\substack{0\leq l\leq i\\ r+(a-b)l+bi\ {\rm{is}}\ k\text{-}{\rm{free}}}}\binom{i}{l}\alpha^l(1-\alpha)^{i-l}\bigg(f(j)+O\Big(ij^{-2+1/k}+\frac{({j-i})^{-1/2}j^{1/k}}{\sqrt{\alpha(1-\alpha)}}\Big)\bigg)\\
		&=f(j)\sum_{\substack{0\leq l\leq i\\ (a-b)l+bi+r\ {\rm{is}}\ k\text{-}{\rm{free}}}}\binom{i}{l}\alpha^l(1-\alpha)^{i-l}+O\Big(ij^{-2+1/k}+\frac{({j-i})^{-1/2}j^{1/k}}{\sqrt{\alpha(1-\alpha)}}\Big).
	\end{align*}
	
	Applying Lemma \ref{lem:sum over k free numers} again with $n=i$, $u=a-b$ and $v=bi+r$, we obtain
	
	\begin{align*}
		\mathbb{E}(X_i X_j)=f(i)f(j)+O\Big(\frac{i^{1/k-1/2}}{\sqrt{\alpha(1-\alpha)}}\Big)+O\Big(ij^{-2+1/k}+\frac{({j-i})^{-1/2}j^{1/k}}{\sqrt{\alpha(1-\alpha)}}\Big),
	\end{align*}
	where we have used the bound \eqref{eq: upper bound of f(i)}. Taking sum over $1\leq i<j\leq N$ and applying Corollary \ref{cor: sums over i,j} to bound the contribution of the $O$-term, we derive
	$$
	\sum_{1\leq i<j\leq N}\mathbb{E}(X_iX_j)=\sum_{1\leq i<j\leq N}f(i)f(j)+O\Big(\frac{N^{3/2+1/k}}{\sqrt{\alpha(1-\alpha)}}\Big).
	$$
	Adding the diagonal terms up to an error term
	$$
	\sum_{1\leq i\leq N}f(i)^2\ll\sum_{1\leq i\leq N}1\ll N,
	$$
	we then obtain
	$$
	\sum_{1\leq i<j\leq N}\mathbb{E}(X_iX_j)=\frac{1}{2}\Big(\sum_{1\leq i\leq N}f(i)\Big)^2+O\Big(\frac{N^{3/2+1/k}}{\sqrt{\alpha(1-\alpha)}}\Big).
	$$
	It is readily to seen from Lemma \ref{lem:mean value of f} that 
	\begin{equation}\label{eq:sum_{i,j}E(X_iX_j)=}
		\sum_{1\leq i<j\leq N}\mathbb{E}(X_iX_j)=\frac{N^2}{2}\theta_k(a,b,r)^2+O\Big(\frac{N^{3/2+1/k}}{\sqrt{\alpha(1-\alpha)}}\Big),
	\end{equation}
	where $\theta_k(a,b,r)$ is defined in \eqref{def: theta(k,a,b,r)}.
	
	Combining \eqref{eq:E(X^2)} and \eqref{eq:sum_{i,j}E(X_iX_j)=} with \eqref{eq:E((s(n))^2)=sum_E(X_iX_j)+sum_E(X_i^2)}, we obtain
	
	$$
	\mathbb{E}\big(\overline{S}_k(N)^2\big)=\theta_k(a,b,r)^2+O\Big(\frac{N^{1/k-1/2}}{\sqrt{\alpha(1-\alpha)}}\Big).
	$$
	Plugging this and \eqref{eq:(E(s(n)))^2=} into \eqref{eq:V(s(n))=E(s(n))^2-(E(s(n)))^2}, we obtain our desired result \eqref{eq: V(s)}.

\end{document}